\newtheorem{theorem}{Theorem}
\newtheorem{proposition}[theorem]{Proposition}
\newenvironment{proof}[1][Proof]{\noindent\textbf{#1.} }{\ \rule{0.5em}{0.5em}}
\begin{document}

\title{Optimal tail comparison under convex majorization}
\author{Daniel J. Fresen\thanks{%
University of Pretoria, Department of Mathematics and Applied Mathematics,
daniel.fresen@up.ac.za}}
\date{}
\maketitle

\begin{abstract}
Following results of Kemperman and Pinelis, we show that if $X$ and $Y$ are real valued random variables such that $\mathbb{E}\left\vert Y\right\vert<\infty$ and for all non-decreasing convex $\varphi:\mathbb{R}\rightarrow [0,\infty)$, $\mathbb{E}\varphi(X)\leq\mathbb{E}\varphi(Y)$, then for all $s\in\mathbb{R}$ with $\mathbb{P}\left\{Y>s\right\}\neq 0$, $\mathbb{P}\left\{X\geq \mathbb{E}\left(Y:Y>s\right)\right\}\leq\mathbb{P}\left\{Y>s\right\}$. This bound is sharp in essentially the strictest possible sense: for any such $Y$ and $s$ there exists such an $X$ with $\mathbb{P}\left\{X\geq \mathbb{E}\left(Y:Y>s\right)\right\}=\mathbb{P}\left\{Y>s\right\}$.
\end{abstract}


\section{Introduction}

\noindent\underline{Majorization}

\smallskip

We consider the problem of estimating the cumulative distribution and
quantile function of a random variable $X$, given that $\mathbb{E}\varphi
\left( X\right) \leq \mathbb{E}\varphi \left( Y\right) $ for some random
variable $Y$ and all $\varphi\in\mathcal{F}$, where $\mathcal{F}$ is some collection of functions $\varphi:\mathbb{R}\rightarrow\mathbb{R}$ (we can think of $X$ as unknown and $Y$ as given). We do not insist that either $\mathbb{E}\varphi\left( X\right)$ or $\mathbb{E}\varphi\left( Y\right)$ are finite, although they should be well defined elements of $[-\infty,\infty]$ for all $\varphi\in\mathcal{F}$. This is a type of majorization; our interest stems from Pisier's version of the Gaussian concentration inequality, see \cite{Pis0}, where $\mathcal{F}$ is the collection of all convex functions. We refer the reader to \cite{MaOlAr} for a general theory of convex majorization.

\bigskip

\noindent\underline{The prototypical application: Gaussian concentration}

\smallskip

If $f:\mathbb{R}^n\rightarrow\mathbb{R}$ is a continuous function that is differentiable a.e. and $A$ and $B$ are independent random vectors in $\mathbb{R}^n$ each with the standard normal distribution, then there exists a real valued random variable $Z$ with the standard normal distribution in $\mathbb{R}$, independent of $A$ and $B$, such that for all convex $\varphi:\mathbb{R}\rightarrow\mathbb{R}$,
\begin{equation}
\mathbb{E}\varphi\left(f(A)-f(B)\right)\leq\mathbb{E}\varphi\left(\frac{\pi}{2}Z\left\vert \nabla f(A)\right\vert\right)\label{proto meijer}
\end{equation}
This is a simplified presentation of Pisier's result in \cite{Pis0}. One can bound the deviations of $f(A)$ about is median $\mathbb{M}f(A)$ in terms of the deviations of $f(A)-f(B)$ about $0$, and using (\ref{proto meijer}) one can then bound these deviations in terms of the deviations of $\frac{\pi}{2}Z\left\vert \nabla f(A)\right\vert$ about $0$. This is usually done by applying Markov's inequality, and the result is satisfactory for many purposes. However:

\noindent$\bullet$ The application of Markov's inequality would be ad hoc; the type of function $\varphi$ would depend on the distribution of $\frac{\pi}{2}Z\left\vert \nabla f(A)\right\vert$. While this is usually possible or even easy to implement, it would be better not to have to bother with it at all.

\noindent$\bullet$ The way Markov's inequality as usually applied, using exponential or power functions, one is typically not able to recover the correct order of magnitude of the tail probabilities. One can often do so up to a factor of $(1+o(1))$ in the exponent, but usually not up to a multiplicative factor of $C$.

\noindent$\bullet$ When the tails of $\left\vert \nabla f(A)\right\vert$ are very heavy, say polynomial, Markov's inequality combined with functions of the form $\varphi(x)=\left(\max\{0,x\}\right)^p$, for $p$ in a universally bounded interval, say $p\in(1,10)$, fails to recover non-trivial sub-Gaussian estimates in the central region of the distribution.

\bigskip

This paper plays a supporting role in a series of papers together with \cite{Fr20, FrIIa, FrIIb} that present novel applications of the Gaussian concentration inequality. There is not enough space here to go into a longer discussion, and we refer the reader to those papers and the references therein, as well as \cite[Section 5]{Pin98}, for more details.

\bigskip

\underline{Back to majorization}

\smallskip

For $x>0$ let
\[
\Lambda_r(x)=\left\{
\begin{array}{ccccc}
x^{-1/r}&:& r\in(0,\infty)\\
-\ln x&:& r=\infty
\end{array}%
\right.
\]
and set $\Lambda_r(0)=\infty$. It follows from Lemma 1 on p797 of \cite{ScWe} (due to Kemperman) that if $r\in(1,\infty]$, and $X$ and $Y$ are non-negative random variables such that $\mathbb{E}\varphi\left( X\right) \leq \mathbb{E}\varphi \left( Y\right) $ for all functions of the form $\varphi(x)=\max\left\{x-a,0\right\}$ ($a>0$), and we assume that $\Lambda_r\left(\mathbb{P}\left\{Y\geq t\right\}\right)$ is a convex function of $t\geq 0$, then for all such $t$,
\[
\mathbb{P}\left\{X\geq t\right\}\leq C(r)\mathbb{P}\left\{Y\geq t\right\}
\]
where
\[
C(r)=\left\{
\begin{array}{ccc}
\left(1-\frac{1}{r}\right)^{-r} &:& r\in(1,\infty)\\
e &:& r=\infty
\end{array}
\right.
\]
This is generalized by Pinelis \cite{Pin98, Pin99} who considers the case where $0<\alpha<r\leq\infty$ and $\mathcal{F}=\mathcal{F}_\alpha$ ($\alpha>0$) is the collection of non-decreasing $\varphi:\mathbb{R}\cup\{-\infty\}\rightarrow\mathbb{R}$ such that $\left(\varphi-\varphi(-\infty)\right)^{1/\alpha}$ is convex on $\mathbb{R}$. See in particular Theorem 3.11 in \cite{Pin98} and Theorem 4 in \cite{Pin99} for his main results in this direction. As noted in  \cite[Remark 3.13]{Pin98}, the convexity requirement on the tails of $Y$ can be relaxed at the cost of optimality. The coefficient $C(r)$ in Kemperman's result, and a similar coefficient $C(r,\alpha,\beta)$ in results of Pinelis, are sharp in the sense that they are pointwise least possible among all such functions of 1 (resp. 3) variables.

\bigskip

\underline{More on this paper}

\smallskip
In what follows we present an alternative approach to the theory of tail comparison inequalities under convex majorization under minimal assumptions and yielding results that are optimal not only among a class of functions, but for each choice of $Y$ and for each value of $t=\mathbb{E}\left(Y:Y>s\right)$ (for suitable $s\in\mathbb{R}$, see Proposition \ref{conditional bound}).

When applied to the Gaussian concentration inequality one has two options:

\noindent$\bullet$ More precise but less explicit estimates comparing the tail probabilities of $f(A)-\mathbb{M}(A)$ to those of $\frac{\pi}{2}Z\left\vert \nabla f(A)\right\vert$, see Proposition \ref{conditional bound} and more explicit estimates in Propositions \ref{prec prob} and \ref{areeae}, or

\noindent$\bullet$ Instant order of magnitude bounds for the quantiles of $f(A)-\mathbb{M}(A)$ under mild assumptions on the tails of $\frac{\pi}{2}Z\left\vert \nabla f(A)\right\vert$; see Proposition \ref{Gaussian v}).

\bigskip

In either case one avoids the ad hoc application of Markov's inequality; it is still used in the background, with a function of the form $\varphi(x)=\max\{0,x-b\}$, but is confined to the proof of Proposition \ref{conditional bound}.

\section{Notation and basic comments}

$\mathbb{M}$ denotes median, $\mathbb{E}$ expectation, and $C$, $c$, etc. universal constants. For a real valued random variable $Y$ with distribution $\mu $ on the Borel subsets of $\mathbb{R}$, the cumulative
distribution $F_Y:\mathbb{R}\rightarrow \left[ 0,1\right] $ is defined by $%
F_Y(t)=\mu \left( -\infty ,t\right] $. The generalized inverse $F_Y^{-1}:\left(
0,1\right) \rightarrow \mathbb{R}$ defined by $F_Y^{-1}(s)=\inf \left\{ t\in \mathbb{R}:F_Y(t)\geq s\right\}$ is known as the quantile function and also denoted $H_Y$. $H_Y$ is non-decreasing, left continuous, and if $U$ is a random variable uniformly distributed in $(0,1)$, then $%
H_Y\left( U\right) $ has distribution $\mu $. It will be convenient to phrase certain results in terms of the function $H_Y^*(x)=H_Y(1-x)$, $x\in(0,1)$. Since $(0,1)$ endowed with Lebesgue measure on its Borel subsets is a probability space, $H_Y$ is in fact a random variable in its own right, also with distribution $\mu$, and is therefore its own quantile function. $\mu$ is non-atomic if and only if $H_Y$ is strictly increasing. For each $t\in\mathbb{R}$, let $E_t$ be the collection of all non-negative convex functions $\varphi :\mathbb{R}\rightarrow \mathbb{R}$ that are strictly increasing on $\left[ t,\infty \right) $, and such that $\varphi
(t)\neq 0$. Let $E^*_t$ be the collection of all functions $\varphi _{t,a}:\mathbb{R}\rightarrow\mathbb{R}$ of the form $\varphi _{t,a}(x)=\max \left\{ 0,a\left( x-t\right) +1\right\} $, for $a\in \left( 0,\infty \right) $. Let
\[
E=\cup_{t\in\mathbb{R}}E_t\hspace{2cm}E^*=\cup_{t\in\mathbb{R}}E_t^*
\]
so $E$ is the collection of all non-negative convex functions from $\mathbb{R}$ to $\mathbb{R}$, excluding the constant functions.

\section{Results}

\begin{proposition}
\label{minimizer}Let $\mu $ be a probability measure on $\mathbb{R%
}$ with $\int_{[0,\infty)}x d\mu (x)<\infty $, let 
$Y$ be a random variable with distribution $\mu $, and consider any $t\in \mathbb{R}$.

\smallskip

\noindent I. For all $\varphi\in E_t$ there exists $\varphi _{t,a}\in E^*_t$ such that
\[
\frac{\mathbb{E}\varphi_{t,a} \left( Y\right) }{\varphi_{t,a} (t)}\leq\frac{\mathbb{E}\varphi \left( Y\right) }{\varphi (t)}
\]
\noindent II. If we assume, in addition, that $\mu$ is non-atomic, that $t>\mathbb{E}Y$, and that $\mu\left((t,\infty)\right)>0$, then the function%
\begin{equation*}
\varphi \mapsto \frac{\mathbb{E}\varphi \left( Y\right) }{\varphi (t)}
\end{equation*}%
defined on $E_t$ and taking values in $\left[ 0,\infty \right] $ achieves a global
minimum at some $\varphi _{t,a}\in E^*_t$, where $a\in \left( 0,\infty \right) $ is such that%
\begin{equation}
\int_{t-a^{-1}}^{\infty }\left( x-t\right) d\mu (x)=0\label{min cinditii}
\end{equation}%
Any such $a$ defines a minimizer, and at least one such $a$ exists. The corresponding minimum is
\begin{equation}
\frac{\mathbb{E}\varphi_{t,a} \left( Y\right) }{\varphi_{t,a} (t)}=\mathbb{P}\{Y>t-a^{-1}\}\label{minimuumu}
\end{equation}
\end{proposition}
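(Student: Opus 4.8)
The plan is to reduce the problem to the one-parameter family $E_t^*$, convert it into an elementary calculus problem, and then carry out a sign analysis. First I would invoke Part I: since each $\varphi_{t,a}$ is itself non-negative, convex, strictly increasing on $[t,\infty)$, and satisfies $\varphi_{t,a}(t)=1\neq 0$, we have $E_t^*\subseteq E_t$, so Part I shows that the infimum of $\varphi\mapsto\mathbb{E}\varphi(Y)/\varphi(t)$ over $E_t$ coincides with its infimum over $E_t^*$; consequently any minimizer exhibited in $E_t^*$ is automatically a global minimizer over $E_t$. Writing $b=t-a^{-1}$, so that $a\mapsto b$ is an increasing bijection from $(0,\infty)$ onto $(-\infty,t)$, and using $\varphi_{t,a}(t)=1$ together with $\varphi_{t,a}(x)=(x-b)/(t-b)$ for $x\geq b$ and $0$ otherwise, the objective becomes
\[
g(b):=\frac{\mathbb{E}\varphi_{t,a}(Y)}{\varphi_{t,a}(t)}=\frac{1}{t-b}\int_{b}^{\infty}(x-b)\,d\mu(x),\qquad b<t,
\]
which is finite for every such $b$ because $\int_{[0,\infty)}x\,d\mu<\infty$.

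Next I would differentiate $g$, and here non-atomicity does the work. Writing $S(y)=\mathbb{P}\{Y>y\}$, which is continuous, Fubini gives $\int_b^\infty(x-b)\,d\mu(x)=\int_b^\infty S(y)\,dy$, so the numerator of $g$ is $C^1$ in $b$ with derivative $-S(b)$. A quotient-rule computation, together with the identity $\int_b^\infty(x-b)\,d\mu-(t-b)S(b)=\int_b^\infty(x-t)\,d\mu$, then yields
\[
g'(b)=\frac{N(b)}{(t-b)^2},\qquad N(b):=\int_b^\infty(x-t)\,d\mu(x).
\]
Thus the critical points of $g$ are exactly the zeros of $N$, which under $b=t-a^{-1}$ is precisely condition (\ref{min cinditii}).

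The heart of the matter, and the step I expect to be the main obstacle, is the sign analysis of $N$. Because the integrand $x-t$ is $\leq 0$ on $(-\infty,t)$, for $b_1<b_2<t$ one has $N(b_1)-N(b_2)=\int_{b_1}^{b_2}(x-t)\,d\mu\leq 0$, so $N$ is non-decreasing on $(-\infty,t)$, and it is continuous by non-atomicity. As $b\to-\infty$, monotone convergence gives $N(b)\to\mathbb{E}Y-t$, which is strictly negative since $t>\mathbb{E}Y$; the delicate point to handle carefully is that $\mathbb{E}Y$ may equal $-\infty$, but the positive-moment hypothesis keeps $\int(x-t)^+\,d\mu$ finite, so the limit is well-defined in $[-\infty,0)$. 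As $b\uparrow t$, $N(b)\to\int_{(t,\infty)}(x-t)\,d\mu>0$ because $\mu((t,\infty))>0$. The intermediate value theorem then produces a zero $b^*\in(-\infty,t)$, i.e. a solution $a=(t-b^*)^{-1}$ of (\ref{min cinditii}); and since $N$ is monotone and continuous, its zero set is a (possibly degenerate) interval on which $g'=0$, with $g'<0$ to its left and $g'>0$ to its right. Hence $g$ is strictly decreasing, then constant, then strictly increasing, so its global minimum is attained exactly on the critical set — every $a$ solving (\ref{min cinditii}) is a minimizer, and at least one exists.

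Finally I would evaluate the minimum. At any critical $b^*$, using $N(b^*)=0$,
\[
g(b^*)=\frac{1}{t-b^*}\int_{b^*}^{\infty}(x-b^*)\,d\mu=\frac{N(b^*)+(t-b^*)S(b^*)}{t-b^*}=S(b^*)=\mathbb{P}\{Y>t-a^{-1}\},
\]
which is exactly (\ref{minimuumu}). The only genuinely technical ingredients are the justification of $G'=-S$ via non-atomicity and the careful treatment of the limit $N(-\infty)=\mathbb{E}Y-t$ when $\mathbb{E}Y=-\infty$; everything else is the monotonicity/intermediate-value argument for $N$.
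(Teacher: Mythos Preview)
Your argument for Part~II is correct and essentially the paper's: both reduce to $E_t^*$ via Part~I, differentiate the objective (you in the variable $b=t-a^{-1}$, the paper directly in $a$) to obtain the same critical quantity $\int_{t-a^{-1}}^\infty(x-t)\,d\mu$, show it is continuous, monotone, negative for small $a$ and positive as $a\to\infty$, and invoke the intermediate value theorem; the paper phrases the conclusion as convexity in $a$, you as a sign analysis of $g'$, but these are the same argument. You do not, however, prove Part~I, which the paper dispatches in one line: after normalizing $\varphi(t)=1$, convexity gives $\varphi(x)\geq\max\{0,\varphi'(t^+)(x-t)+1\}$, so $\mathbb{E}\varphi(Y)\geq\mathbb{E}\varphi_{t,a}(Y)$ with $a=\varphi'(t^+)$.
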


\begin{proof}
Replacing $\varphi $ with $\varphi /\varphi (t)$, we may assume that $%
\varphi (t)=1$, and so we add this condition to the constraints on $\varphi $%
. By convexity $\varphi (x)\geq \max \left\{ 0,\varphi ^{\prime }(t)\left(
x-t\right) +1\right\} $, where $\varphi ^{\prime }(t)$ here denotes $%
\lim_{h\rightarrow 0^{+}}\left( \varphi \left( t+h\right) -\varphi \left(
t\right) \right) /h$ (necessarily exists and is finite), and so $\mathbb{E}%
\varphi \left( Y\right) \geq \mathbb{E}\max \left\{ 0,\varphi ^{\prime
}(t)\left( Y-t\right) +1\right\} $. This proves \textit{I} and implies that we may restrict our attention to $E^*_t$. Differentiating under the integral sign, for $a\in \left( 0,\infty \right) $ 
\begin{equation*}
\frac{d}{da}\int_{\mathbb{R}}\max \left\{ 0,a\left( x-t\right) +1\right\}
d\mu (x)=\int_{t-a^{-1}}^{\infty }\left( x-t\right) d\mu (x)
\end{equation*}%
which is a continuous non-decreasing function of $a$. By the assumption $t>%
\mathbb{E}Y$, this function is negative for some $a\in \left( 0,\infty
\right) $ and converges to $\int_{t}^{\infty }\left( x-t\right) d\mu (x)>
0$ as $a\rightarrow \infty $. Therefore there exists $a>0$ such that $%
\int_{t-a^{-1}}^{\infty }\left( x-t\right) d\mu (x)=0$, and the convex
function $a\mapsto \int_{\mathbb{R}}\max \left\{ 0,a\left( x-t\right)
+1\right\} d\mu (x)$ achieves a global minimum over $\left( 0,\infty \right) 
$ at this value of $a$. (\ref{minimuumu}) follows from (\ref{min cinditii}).
\end{proof}

\begin{proposition}\label{conditional bound}
Let $X$ and $Y$ be real valued random variables such that $\mathbb{E}\max\{0,Y\}<\infty$ and for all $\varphi_a\in E^*$, $\mathbb{E}\varphi_a(X)\leq\mathbb{E}\varphi_a(Y)$. Consider any $s\in\mathbb{R}$.

\noindent I. If $\mathbb{P}\{Y>s\}=0$ then $\mathbb{P}\{X>s\}=0$.

\noindent II. If $\mathbb{P}\{Y>s\}\neq0$ then $\mathbb{P}\left\{X\geq \mathbb{E}\left(Y:Y>s\right)\right\}\leq\mathbb{P}\{Y>s\}$.
\end{proposition}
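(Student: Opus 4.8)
The plan is to derive both parts from Markov's inequality applied to a single well-chosen member of $E^*$, exploiting the hypothesis $\mathbb{E}\varphi_a(X)\le\mathbb{E}\varphi_a(Y)$. For Part II, set $t=\mathbb{E}(Y:Y>s)$ and $p=\mathbb{P}\{Y>s\}>0$; since $Y>s$ strictly on the conditioning event, $t>s$, so $a=(t-s)^{-1}\in(0,\infty)$ and $\varphi_{t,a}\in E_t^*\subseteq E^*$. This choice is dictated by the minimizer condition of Proposition \ref{minimizer}: it places the kink $t-a^{-1}$ of $\varphi_{t,a}$ exactly at $s$, so that $\varphi_{t,a}(x)=0$ for $x<s$ and $\varphi_{t,a}(x)=a(x-t)+1$ for $x\ge s$.

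With this $\varphi_{t,a}$ I would first note $\varphi_{t,a}(t)=1$ and that $\varphi_{t,a}$ is strictly increasing on $[t,\infty)$, whence $\{X\ge t\}=\{\varphi_{t,a}(X)\ge 1\}$; Markov's inequality and the hypothesis then give $\mathbb{P}\{X\ge t\}\le\mathbb{E}\varphi_{t,a}(X)\le\mathbb{E}\varphi_{t,a}(Y)$. It remains to evaluate the right-hand side. Expanding, $\mathbb{E}\varphi_{t,a}(Y)=a\,\mathbb{E}\big[(Y-t)\mathbf{1}_{\{Y\ge s\}}\big]+\mathbb{P}\{Y\ge s\}$, and I would split the expectation over $\{Y>s\}$ and $\{Y=s\}$. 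The defining identity $\mathbb{E}[Y\mathbf{1}_{\{Y>s\}}]=tp$ annihilates the $\{Y>s\}$ term, while the atomic part contributes $a(s-t)\mathbb{P}\{Y=s\}=-\mathbb{P}\{Y=s\}$; adding $\mathbb{P}\{Y\ge s\}=p+\mathbb{P}\{Y=s\}$ collapses everything to exactly $p$. The main, and really the only delicate, obstacle is this bookkeeping around a possible atom of $Y$ at $s$: it is precisely the choice $a=(t-s)^{-1}$ that cancels the gap between $\mathbb{P}\{Y\ge s\}$ and $\mathbb{P}\{Y>s\}$, producing the sharp bound $\mathbb{P}\{Y>s\}$ rather than the weaker $\mathbb{P}\{Y\ge s\}$. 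The assumption $\mathbb{E}\max\{0,Y\}<\infty$ is used throughout to guarantee that $t$ and $\mathbb{E}\varphi_{t,a}(Y)$ are finite and well defined.

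For Part I, where $\mathbb{P}\{Y>s\}=0$ and hence $Y\le s$ almost surely, I would test against $\varphi_{s,a}$ and let $a\to\infty$. Because $a(Y-s)+1\le 1$ a.s., the bound $\mathbb{E}\varphi_{s,a}(Y)\le 1$ holds uniformly in $a$. If instead $\mathbb{P}\{X>s\}>0$, then some $\varepsilon>0$ has $\delta:=\mathbb{P}\{X\ge s+\varepsilon\}>0$, and on that event $\varphi_{s,a}(X)\ge a\varepsilon+1$, so $\mathbb{E}\varphi_{s,a}(X)\ge(a\varepsilon+1)\delta\to\infty$ as $a\to\infty$, contradicting $\mathbb{E}\varphi_{s,a}(X)\le\mathbb{E}\varphi_{s,a}(Y)\le 1$. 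Hence $\mathbb{P}\{X>s\}=0$.
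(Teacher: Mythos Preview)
Your proof is correct and follows essentially the same approach as the paper: apply Markov's inequality with the test function having kink at $s$ and value $1$ at $t=\mathbb{E}(Y:Y>s)$, then compute $\mathbb{E}\varphi_{t,a}(Y)=\mathbb{P}\{Y>s\}$. Two cosmetic differences: (i) your bookkeeping around a possible atom at $s$ is unnecessary, since $\varphi_{t,a}(s)=0$ means the integral over $\{Y=s\}$ contributes nothing---the paper simply integrates over $(s,\infty)$ directly; (ii) for Part~I the paper reuses the same inequality (valid for every $t>s$ when $\mathbb{P}\{Y>s\}=0$) and invokes continuity of measures, whereas you argue by contradiction with $\varphi_{s,a}$ and $a\to\infty$---both are straightforward limiting arguments of the same flavor.
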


\begin{proof}
Consider the collection (which is non-empty) of all $t\in\mathbb{R}$ such that $t>s$ and $\mathbb{E}\left[Y1_{(s,\infty)}(Y)\right]\leq t\mathbb{P}\{Y>s\}$. Momentarily consider any such $t$. Setting $\varphi(x)=\max\{(x-s)/(t-s),0\}$,
\begin{eqnarray*}
&&\mathbb{P}\{X\geq t\}=\mathbb{P}\{\varphi(X)\geq \varphi(t)\}\leq\frac{\mathbb{E}\varphi(X)}{\varphi(t)}\leq\mathbb{E}\varphi(Y)=\int_{(s,\infty)}\frac{x-s}{t-s}d\mu(x)\\
&&=\int_{(s,\infty)}\frac{x-t}{t-s}d\mu(x)+\mathbb{P}\{Y>s\}=\frac{\mathbb{E}\left[Y1_{(s,\infty)}(Y)\right]- t\mathbb{P}\{Y>s\}}{t-s} +\mathbb{P}\{Y>s\}
\end{eqnarray*}
This last quantity is at most $\mathbb{P}\{Y>s\}$. If $\mathbb{P}\{Y>s\}=0$ then $\mathbb{P}\{X\geq t\}=0$ for all $t>s$, so by continuity of measures, $\mathbb{P}\{X>s\}=0$. This proves I. If $\mathbb{P}\{Y>s\}\neq0$, we choose $t=\mathbb{E}\left[Y1_{(s,\infty)}(Y)\right]/\mathbb{P}\{Y>s\}$ which proves II.
\end{proof}

\begin{proposition}\label{sharpness p}
Let $Y$ be a real valued random variable such that $\mathbb{E}\left\vert Y\right\vert<\infty$ and consider any $s\in\mathbb{R}$ such that $\mathbb{P}\{Y\geq s\}\neq0$. Then there exists a real valued random variable $X$ such that $\mathbb{E}\varphi(X)\leq \mathbb{E}\varphi(Y)$ for all convex $\varphi$ and $\mathbb{P}\left\{X\geq \mathbb{E}\left(Y:Y\geq s\right)\right\}=\mathbb{P}\{Y\geq s\}$.
\end{proposition}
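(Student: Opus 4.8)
The plan is to realise $X$ as a mean-preserving contraction of $Y$, collapsing each of the two pieces of the distribution of $Y$ cut out by the threshold $s$ to its own conditional barycentre. Concretely, write $p=\mathbb{P}\{Y\geq s\}$ and $m=\mathbb{E}\left(Y:Y\geq s\right)$, and, when $p<1$, also $m'=\mathbb{E}\left(Y:Y<s\right)$; both are finite by the hypothesis $\mathbb{E}\left\vert Y\right\vert<\infty$. I then let $X$ take the value $m$ with probability $p$ and the value $m'$ with probability $1-p$ (if $p=1$ I simply take $X\equiv m$). Heuristically, replacing a random variable by its conditional expectation on each cell of a partition can only decrease $\mathbb{E}\varphi$ for convex $\varphi$, so $X$ should lie below $Y$ in the convex order, while the upper atom is placed exactly at $m$ and carries exactly the mass $p$.

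For the convex-order inequality I would fix a convex $\varphi:\mathbb{R}\rightarrow\mathbb{R}$ and apply Jensen's inequality to each of the two conditional probability measures. Choosing $\alpha,\beta\in\mathbb{R}$ with $\varphi(x)\geq\alpha x+\beta$ for all $x$ (a supporting line, which exists by convexity), the bound on the negative part $\varphi(Y)^-\leq\left\vert\alpha\right\vert\left\vert Y\right\vert+\left\vert\beta\right\vert$ together with $\mathbb{E}\left\vert Y\right\vert<\infty$ guarantees that the relevant expectations are well defined in $(-\infty,\infty]$, so Jensen is legitimate and yields $\mathbb{E}\left[\varphi(Y)1_{\{Y\geq s\}}\right]\geq p\,\varphi(m)$ and $\mathbb{E}\left[\varphi(Y)1_{\{Y<s\}}\right]\geq(1-p)\,\varphi(m')$. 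Adding these gives $\mathbb{E}\varphi(Y)\geq p\,\varphi(m)+(1-p)\,\varphi(m')=\mathbb{E}\varphi(X)$, i.e. $\mathbb{E}\varphi(X)\leq\mathbb{E}\varphi(Y)$; the case $\mathbb{E}\varphi(Y)=\infty$ is trivial and the case $p=1$ is immediate. Note that this establishes the inequality for \emph{all} convex $\varphi$, which is exactly the strong majorization demanded by the proposition.

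It remains to pin down the tail probability, and this is where the one genuinely load-bearing point lies: I need the equality $\mathbb{P}\{X\geq m\}=p$, not merely an inequality, and for that I must show that the lower atom lies strictly below $m$. On the event $\{Y<s\}$ one has $Y<s$ surely, so $s-Y$ is strictly positive there and hence its conditional expectation is strictly positive, giving $m'<s$; on the event $\{Y\geq s\}$ one has $Y\geq s$, hence $m\geq s$. Thus $m'<s\leq m$, so $m'<m$. Consequently $\{X\geq m\}$ coincides, up to a null set, with $\{Y\geq s\}$: on $\{Y\geq s\}$ we have $X=m\geq m$, while on $\{Y<s\}$ we have $X=m'<m$. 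Therefore $\mathbb{P}\{X\geq m\}=\mathbb{P}\{Y\geq s\}=p$, which is the claimed equality with $m=\mathbb{E}\left(Y:Y\geq s\right)$. The main obstacle is thus not the construction but verifying this strictness $m'<m$, which is precisely what upgrades the one-sided bound of Proposition \ref{conditional bound} to an exact equality.
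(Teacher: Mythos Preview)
Your proof is correct and rests on the same engine as the paper's: replace $Y$ by a conditional barycentre on the event $\{Y\geq s\}$ and invoke Jensen's inequality. The one difference is that the paper collapses only the upper piece, taking $X=Y\,1_{(-\infty,s)}(Y)+m\,1_{[s,\infty)}(Y)$, whereas you collapse both pieces to a two-point variable. Your $X$ is thus a further contraction of the paper's, and both sit below $Y$ in the convex order for the same Jensen reason. The paper's choice makes the verification of $\mathbb{P}\{X\geq m\}=p$ immediate (on $\{Y<s\}$ one has $X=Y<s\leq m$ directly), while you need the extra line $m'<s\leq m$; conversely, your construction yields a concrete finitely supported extremiser, which is aesthetically pleasant. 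Neither gain is substantial; the arguments are essentially the same.
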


\begin{proof}
Let $X=Y1_{(-\infty,s)}(Y)+\mathbb{E}\left(Y:Y\geq s\right)1_{[s,\infty)}(Y)$. For any convex $\varphi$, by Jensen's inequality $\mathbb{E}\varphi(X)=\mathbb{E}\left[\varphi(Y)1_{(-\infty,s)}(Y)\right]+\mathbb{P}\{Y\geq s\}\varphi\left(\mathbb{E}\left(Y:Y\geq s\right)\right)$ which is
\begin{eqnarray*}
&\leq&\mathbb{E}\left[\varphi(Y)1_{(-\infty,s)}(Y)\right]+\mathbb{P}\{Y\geq s\}\mathbb{E}\left(\varphi(Y):Y\geq s\right)\\
&=&\mathbb{E}\left[\varphi(Y)1_{(-\infty,s)}(Y)\right]+\mathbb{E}\left[\varphi(Y)1_{[s,\infty)}(Y)\right]=\mathbb{E}\varphi(Y)
\end{eqnarray*}
\end{proof}

\begin{proposition}\label{maximal quantile}
Let $X$ and $Y$ be real valued random variables such that $\mathbb{E}\max\{0,Y\}<\infty$ and for all $\varphi\in E^*$, $\mathbb{E}\varphi(X)\leq\mathbb{E}\varphi(Y)$. For all $x\in(0,1)$ the quantile function of $X$ obeys
\begin{equation}
H_X(x)\leq\frac{1}{1-x} \int_x^1H_Y(u)du\label{quant max bound}
\end{equation}
\end{proposition}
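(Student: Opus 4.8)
The plan is to reduce the statement to the classical equivalence between the increasing convex (stop-loss) order and the pointwise ordering of integrated quantile functions, and then to read off (\ref{quant max bound}) by a one-line averaging argument. First I would reformulate the hypothesis. Every $\varphi_{t,a}\in E^*$ satisfies $\varphi_{t,a}(x)=a\max\{0,x-c\}$ with $c=t-a^{-1}$, and as $(t,a)$ ranges over $\mathbb{R}\times(0,\infty)$ the kink $c$ ranges over all of $\mathbb{R}$; dividing by $a>0$, the assumption $\mathbb{E}\varphi(X)\le\mathbb{E}\varphi(Y)$ for all $\varphi\in E^*$ is equivalent to $\mathbb{E}\max\{0,X-c\}\le\mathbb{E}\max\{0,Y-c\}$ for every $c\in\mathbb{R}$. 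Taking $c=0$ gives $\mathbb{E}\max\{0,X\}\le\mathbb{E}\max\{0,Y\}<\infty$, so $H_X,H_Y$ are real-valued on $(0,1)$ with integrable positive parts; I will use freely that $\mathbb{E}\psi(Z)=\int_0^1\psi(H_Z(u))\,du$ for $Z\in\{X,Y\}$ and measurable $\psi\ge 0$, since $H_Z(U)$ has the law of $Z$.

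The core step is the Legendre-type identity
\[
\int_x^1 H_Z(u)\,du=\inf_{c\in\mathbb{R}}\Big[(1-x)c+\mathbb{E}\max\{0,Z-c\}\Big],\qquad x\in(0,1).
\]
To prove ``$\ge$'' for every $c$ I would use $\max\{0,H_Z(u)-c\}\ge H_Z(u)-c$ and discard the nonnegative contribution of $(0,x)$:
\[
(1-x)c+\int_0^1\max\{0,H_Z(u)-c\}\,du\ \ge\ (1-x)c+\int_x^1\big(H_Z(u)-c\big)\,du=\int_x^1 H_Z(u)\,du .
\]
For attainment I would take $c=H_Z(x)$: since $H_Z$ is non-decreasing, $\max\{0,H_Z(u)-H_Z(x)\}$ vanishes for every $u<x$ and equals $H_Z(u)-H_Z(x)$ for $u>x$, so both inequalities above become equalities and the infimum is achieved with value $\int_x^1 H_Z(u)\,du$.

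With the identity in hand the conclusion is immediate. For each fixed $c$ the reformulated hypothesis gives $(1-x)c+\mathbb{E}\max\{0,X-c\}\le(1-x)c+\mathbb{E}\max\{0,Y-c\}$; taking the infimum over $c$ and invoking the identity for $X$ and for $Y$ yields $\int_x^1 H_X(u)\,du\le\int_x^1 H_Y(u)\,du$ for every $x\in(0,1)$. Finally, since $H_X$ is non-decreasing, $\int_x^1 H_X(u)\,du\ge(1-x)H_X(x)$, whence
\[
H_X(x)\le\frac{1}{1-x}\int_x^1 H_X(u)\,du\le\frac{1}{1-x}\int_x^1 H_Y(u)\,du .
\]

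The main obstacle is the identity, and within it the attainment half: the ``$\ge$'' direction is a soft pointwise estimate, but getting equality requires locating the optimal threshold $c=H_Z(x)$ and using monotonicity of $H_Z$ to annihilate the $(0,x)$ part exactly. A secondary point is justifying $\mathbb{E}\max\{0,X\}<\infty$, so that the identity applies to $X$ and not merely to $Y$; this follows by taking $c=0$ above. I note one can bypass $H_X$ and argue exactly as in Proposition \ref{conditional bound}: apply Markov's inequality to $\varphi(y)=\max\{0,y-H_Y(x)\}$ at the level $t=\tfrac{1}{1-x}\int_x^1 H_Y(u)\,du$, where the same evaluation of $\mathbb{E}\max\{0,Y-H_Y(x)\}$ gives $\mathbb{P}\{X\ge t\}\le 1-x$ and hence $H_X(x)\le t$, with the degenerate case $t=H_Y(x)$ handled by part I of Proposition \ref{conditional bound}.
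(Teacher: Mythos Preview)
Your proof is correct, and your primary argument is genuinely different from the paper's. The paper applies Proposition~\ref{conditional bound} with $s=H_Y(x)$, which yields $\mathbb{P}\{X\ge \mathbb{E}(Y:Y>H_Y(x))\}\le\mathbb{P}\{Y>H_Y(x)\}$; it then identifies $\mathbb{E}(Y:Y>H_Y(x))$ with $\frac{1}{1-x}\int_x^1 H_Y$ and $\mathbb{P}\{Y>H_Y(x)\}$ with $1-x$. That identification requires $\{H_Y>H_Y(x)\}=(x,1)$, which fails when $H_Y$ has flat spots, so the paper first assumes $H_Y$ strictly increasing and then removes atoms by the perturbation $H^\sharp(x)=H_Y(x)+\varepsilon x$ and a limit $\varepsilon\to0^+$. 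Your route instead passes through the Hardy--Littlewood/Legendre identity $\int_x^1 H_Z=\inf_c[(1-x)c+\mathbb{E}(Z-c)_+]$, obtaining the stronger intermediate conclusion $\int_x^1 H_X\le\int_x^1 H_Y$ for all $x\in(0,1)$ (the classical integrated-survival characterization of the increasing convex order), and then reads off (\ref{quant max bound}) from monotonicity of $H_X$. This avoids any case analysis or perturbation and yields a sharper intermediate statement; the paper's approach is more self-contained in that it reuses Proposition~\ref{conditional bound} directly rather than invoking the stop-loss/quantile duality. Your closing remark about the Markov bypass with $\varphi(y)=\max\{0,y-H_Y(x)\}$ is essentially the paper's argument carried out on the quantile scale, and in fact your computation $\int_0^1\max\{0,H_Y(u)-H_Y(x)\}\,du=\int_x^1(H_Y(u)-H_Y(x))\,du$ already handles flat spots, so even there the perturbation is unnecessary.
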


\begin{proof}
We start by assuming that $H_Y$ is strictly increasing. Using Proposition \ref{conditional bound} with $s=H_Y(x)$,
\begin{equation}
\mathbb{P}\left\{X\geq \mathbb{E}\left(Y:Y>H_Y(x)\right)\right\}\leq\mathbb{P}\left\{Y>H_Y(x)\right\}\label{praesti}
\end{equation}
Since $H_Y$ as a random variable defined on $(0,1)$ has the same distribution as $Y$, we may evaluate probabilities and expected values involving $Y$ by using $H_Y$ instead. Now $\left\{H_Y>H_Y(x)\right\}=(x,1)$, so $RHS$ of the above equation is $1-x$. On the other hand,
\[
\mathbb{E}\left(Y:Y>H_Y(x)\right)=\mathbb{E}\left(H_Y:H_Y>H_Y(x)\right)=\frac{1}{1-x}\int_x^1H_Y(u)du
\]
So (\ref{praesti}) can be re-written as
\[
\mathbb{P}\left\{X<\frac{1}{1-x}\int_x^1H_Y(u)du\right\}\geq x
\]
which implies (\ref{quant max bound}) since an infimum is always a lower bound. If $H_Y$ is not strictly increasing, define $H^\sharp(x)=H_Y(x)+\varepsilon x$, for $x\in(0,1)$, and note that $H^\sharp$ is strictly increasing and left continuous, so it is therefore the quantile function of some random variable (it is in fact its own quantile function). Now apply what has been proved to $X$ and $H^\sharp$ and take $\varepsilon\rightarrow 0^+$.
\end{proof}

\begin{proposition}\label{mag of quanti}
If in Proposition \ref{maximal quantile} we have $H_Y(x)\leq H(x)$ for all $x\in(0,1)$ and some function $H:(0,1)\rightarrow(0,\infty)$, and for all $x,\delta\in(0,1)$, $H\left(1-\delta(1-x)\right)\leq \omega(\delta)H(x)$ where $\omega:(0,1)\rightarrow(1,\infty)$ is any non-increasing function such that $\int_0^1\omega(u)du<\infty$, then for all $x\in(0,1)$,
\[
H_X(x)\leq H(x)\int_0^1\omega(u)du
\]
\end{proposition}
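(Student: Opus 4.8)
The plan is to start directly from the conclusion of Proposition \ref{maximal quantile}, which already gives the bound $H_X(x)\leq \frac{1}{1-x}\int_x^1 H_Y(u)\,du$, and then control the integral on the right using the two new hypotheses. The first hypothesis, $H_Y(u)\leq H(u)$, lets me replace $H_Y$ by $H$ under the integral, so that $H_X(x)\leq \frac{1}{1-x}\int_x^1 H(u)\,du$. Everything then reduces to showing that $\frac{1}{1-x}\int_x^1 H(u)\,du\leq H(x)\int_0^1\omega(u)\,du$.

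The key step is a change of variables designed to match the scaling form of the second hypothesis. For fixed $x\in(0,1)$ I would substitute $u=1-\delta(1-x)$, so that $\delta=(1-u)/(1-x)$ runs from $1$ down to $0$ as $u$ runs from $x$ up to $1$, with $du=-(1-x)\,d\delta$. After flipping the limits of integration this yields the identity $\int_x^1 H(u)\,du=(1-x)\int_0^1 H\bigl(1-\delta(1-x)\bigr)\,d\delta$.

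Now the hypothesis $H\bigl(1-\delta(1-x)\bigr)\leq \omega(\delta)H(x)$ applies pointwise inside the $\delta$-integral, giving $\int_0^1 H\bigl(1-\delta(1-x)\bigr)\,d\delta\leq H(x)\int_0^1\omega(\delta)\,d\delta$. Combining this with the identity above and the factor $\frac{1}{1-x}$ from Proposition \ref{maximal quantile}, the two copies of $(1-x)$ cancel and I obtain $H_X(x)\leq H(x)\int_0^1\omega(u)\,du$, as required.

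I do not expect a genuine obstacle here; the argument is essentially a single well-chosen substitution followed by the pointwise scaling estimate. The only points requiring a small comment are that the integral $\int_0^1\omega(u)\,du$ is finite by assumption, so the bound is non-vacuous, and that the substitution is valid for each fixed $x$ since $1-\delta(1-x)$ stays in $(x,1)\subset(0,1)$ for $\delta\in(0,1)$, keeping both hypotheses applicable throughout the range of integration.
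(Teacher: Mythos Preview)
Your proposal is correct and matches the paper's own proof essentially line for line: both start from Proposition~\ref{maximal quantile}, replace $H_Y$ by $H$, then use the substitution $\delta=(1-u)/(1-x)$ together with the scaling hypothesis $H(1-\delta(1-x))\leq\omega(\delta)H(x)$ to reduce the averaged integral to $H(x)\int_0^1\omega(u)\,du$. The only cosmetic difference is that the paper applies the pointwise bound in the $u$-variable and changes variables afterward, whereas you change variables first; the content is identical.
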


\begin{proof}
This follows by writing
\begin{eqnarray*}
H_X(x)&\leq&\frac{1}{1-x} \int_x^1H(u)du=\frac{1}{1-x} \int_x^1H\left(1-\frac{1-u}{1-x}(1-x)\right)du\\
&\leq&\frac{1}{1-x} \int_x^1H(x)\omega\left(\frac{1-u}{1-x}\right)du=H(x)\int_0^1\omega(u)du
\end{eqnarray*}
\end{proof}

\begin{proposition}\label{Gaussian v}
Consider the setting of Proposition \ref{maximal quantile}, and suppose that $T,\gamma\geq 1$, $p>1$, and $Q:(0,\infty)\rightarrow(0,\infty)$ is a function that satisfies
\begin{equation}
Q(t)\exp\left(\frac{-t^2}{2p}\right)\leq TQ(s)\exp\left(\frac{-s^2}{2p}\right)\label{Q inc rate}
\end{equation}
for all $0<s<t$. If $\mathbb{P}\left\{Y>Q(t)\right\}< \gamma\exp\left(-t^2/2\right)$ for all $t>0$, then for all $t>0$
\begin{equation}
\mathbb{P}\left\{X>\frac{pT}{p-1}Q(t)\right\}\leq \gamma\exp\left(-t^2/2\right)\label{gaussian v bound X}
\end{equation}
\end{proposition}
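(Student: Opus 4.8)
The plan is to convert the desired tail bound into a quantile bound and then invoke Proposition \ref{maximal quantile}. First I would dispose of the trivial range: whenever $\gamma e^{-t^2/2}\geq 1$ the right-hand side of (\ref{gaussian v bound X}) is at least $1$, so the inequality holds automatically. Thus I may assume $\gamma e^{-t^2/2}<1$ and set $x=1-\gamma e^{-t^2/2}\in(0,1)$. Using the standard equivalence $H_X(x)\leq a \Leftrightarrow \mathbb{P}\{X>a\}\leq 1-x$ (a restatement of $H_X(x)\leq a \Leftrightarrow F_X(a)\geq x$ together with $\mathbb{P}\{X>a\}=1-F_X(a)$), and noting $1-x=\gamma e^{-t^2/2}$, it suffices to prove $H_X(x)\leq \frac{pT}{p-1}Q(t)$.

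Next I would translate the hypothesis on $Y$ into a pointwise bound on its quantile function. For $u\in(0,1)$ define $\tau(u)=\sqrt{2\ln\left(\gamma/(1-u)\right)}$; since $\gamma\geq 1$ this is a positive, strictly increasing, continuous function of $u$ on $(x,1)$, and a direct computation gives $\tau(x)=t$. From the assumption $\mathbb{P}\{Y>Q(\tau(u))\}<\gamma e^{-\tau(u)^2/2}=1-u$ I obtain $F_Y(Q(\tau(u)))>u$, and hence $H_Y(u)\leq Q(\tau(u))$ for every $u\in(0,1)$.

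The main computation then combines Proposition \ref{maximal quantile} with the growth condition (\ref{Q inc rate}). For $u>x$ one has $\tau(u)>\tau(x)=t>0$, so (\ref{Q inc rate}) applied to the pair $t<\tau(u)$ gives $Q(\tau(u))\leq TQ(t)\exp\left((\tau(u)^2-t^2)/(2p)\right)$. The crucial simplification is $\tau(u)^2-t^2=2\ln\left((1-x)/(1-u)\right)$, which turns the exponential into the power $\left((1-x)/(1-u)\right)^{1/p}$. Feeding $H_Y(u)\leq Q(\tau(u))$ into (\ref{quant max bound}) and integrating, I would obtain
\[
H_X(x)\leq\frac{1}{1-x}\int_x^1 H_Y(u)\,du\leq\frac{TQ(t)}{1-x}(1-x)^{1/p}\int_x^1(1-u)^{-1/p}\,du.
\]
Here the hypothesis $p>1$ is exactly what guarantees $\int_x^1(1-u)^{-1/p}\,du=\frac{p}{p-1}(1-x)^{1-1/p}<\infty$; substituting this, the powers of $1-x$ cancel and the right-hand side collapses to $\frac{pT}{p-1}Q(t)$, as required.

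I expect the main obstacle to be organizational rather than deep: getting the change of variable $u\mapsto\tau(u)$ and the bookkeeping of exponents right so that condition (\ref{Q inc rate}) produces precisely the integrable power $(1-u)^{-1/p}$. The real content is that (\ref{Q inc rate}) is engineered so that the ``almost sub-Gaussian up to the factor $T$'' behaviour of $Q$ becomes, after passing to quantiles through $\tau$, a polynomial of exponent $1/p<1$; the role of $p>1$ is solely to keep that polynomial integrable near $u=1$, and the constant $\frac{pT}{p-1}$ is exactly the resulting integral.
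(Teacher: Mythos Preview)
Your proof is correct and follows essentially the same route as the paper: translate the tail hypothesis into the quantile bound $H_Y(u)\leq Q(\tau(u))$, feed this into the integral bound from Proposition \ref{maximal quantile}, and use condition (\ref{Q inc rate}) to reduce the integrand to $T((1-x)/(1-u))^{1/p}$, which integrates to $\frac{pT}{p-1}$. The only difference is cosmetic: the paper packages the scaling-and-integration step as the separate Proposition \ref{mag of quanti} (with $H(x)=Q(\tau(x))$ and $\omega(\delta)=T\delta^{-1/p}$), whereas you inline that computation directly.
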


\begin{proof}
Define $H(x)=Q\left(\sqrt{2\ln (\gamma/(1-x))}\right)$ and $\omega(x)=Tx^{-1/p}$ for $x\in(0,1)$, so $\mathbb{P}\left\{Y>H(x)\right\}<1-x$ and by definition of $H_Y$, $H_Y(x)\leq H(x)$. For all $\delta\in(0,1)$, set
\[
t=\sqrt{2\ln\frac{\gamma}{1-x}} \hspace{2cm} t+s=\sqrt{2\ln\frac{\gamma}{\delta(1-x)}}
\]
It now follows from (\ref{Q inc rate}) that $H(1-\delta(1-x))\leq T\delta^{-1/p}H(x)$, so by Proposition \ref{mag of quanti}, for all $t>0$ such that $\gamma\exp\left(-t^2/2\right)<1$,
\[
H_X\left(1-\gamma\exp\left(-t^2/2\right)\right)\leq \frac{pT}{p-1}H\left(1-\gamma\exp\left(-t^2/2\right)\right)=\frac{pTQ(t)}{p-1}
\]
(\ref{gaussian v bound X}) now follows since $H_X$ is non-decreasing and has the same distribution as $X$.
\end{proof}

One can show that when $Y(x)=x^{-1}\left(\log\left(e/x\right)\right)^{-2}$, $x\in(0,1)$,
\[
E_n=\left\{x\in(0,1):e^{-n^2}\leq x< e^{-(n-1)^2}\right\}
\]
$\mathcal{T}=\sigma\left(\left\{E_n:n\in\mathbb{N}\right\}\right)$ and $X=\mathbb{E}\left(Y:\mathcal{T}\right)$, then $\mathbb{E}\left\vert Y\right\vert<\infty$ and for all convex $\varphi:\mathbb{R}\rightarrow\mathbb{R}$, $\mathbb{E}\varphi(X)\leq\mathbb{E}\varphi(Y)$, yet
\begin{equation}
\limsup_{x\rightarrow 1^{-}}\frac{H_X(x)}{H_Y(x)}=\infty\label{lim supper}
\end{equation}
We now shift focus from quantiles to tail probabilities. Recall $H_Y^*(x)=H_Y(1-x)$.

\begin{proposition}\label{prec prob}
Let $X$ and $Y$ be real valued random variables such that $\mathbb{E}\left\vert Y \right\vert<\infty$ and for all non-decreasing convex $\varphi:\mathbb{R}\rightarrow\mathbb{R}$, $\mathbb{E}\varphi(X)\leq\mathbb{E}\varphi(Y)$. Let $x\in(0,1)$, $R>1$ and assume that $H_Y^*\left(x/R\right)\geq(1/x)\int_0^xH_Y^*(u)du$. Then,
\[
\mathbb{P}\left\{X> t\right\}\leq R\mathbb{P}\left\{Y\geq t\right\}
\]
where $t=(1/x)\int_0^xH_Y^*(u)du$. Note: by continuity, for all $t>\mathbb{E}Y$ such that $\mathbb{P}\{Y>t\}\neq0$, there exists $x\in(0,1)$ such that $t=(1/x)\int_0^xH_Y^*(u)du$.

\end{proposition}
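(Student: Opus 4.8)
The plan is to read this off directly from the quantile bound of Proposition \ref{maximal quantile}, because the number $t=(1/x)\int_0^x H_Y^*(u)\,du$ is precisely the right-hand side of (\ref{quant max bound}) evaluated at the point $1-x$. First I would confirm that the hypotheses transfer: every $\varphi\in E^*$ is non-decreasing and convex, so the assumption here (the majorization inequality for all non-decreasing convex $\varphi$) is stronger than what Proposition \ref{maximal quantile} needs, and $\mathbb{E}|Y|<\infty$ gives $\mathbb{E}\max\{0,Y\}<\infty$. Next I would run the change of variables $v=1-u$ to rewrite
\[
t=\frac{1}{x}\int_0^x H_Y^*(u)\,du=\frac{1}{x}\int_0^x H_Y(1-u)\,du=\frac{1}{x}\int_{1-x}^1 H_Y(v)\,dv,
\]
so that Proposition \ref{maximal quantile} applied at the point $1-x\in(0,1)$ reads exactly $H_X(1-x)\leq t$.

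From $H_X(1-x)\leq t$ I would invoke the standard Galois relation between a cumulative distribution and its generalized inverse, namely $H_X(\sigma)\leq u\iff\sigma\leq F_X(u)$, which is valid because right-continuity of $F_X$ makes $\{u:F_X(u)\geq\sigma\}$ closed from the left (it contains its infimum). Taking $\sigma=1-x$ and $u=t$ gives $F_X(t)\geq 1-x$, hence $\mathbb{P}\{X>t\}=1-F_X(t)\leq x$.

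It then remains to bound $\mathbb{P}\{Y\geq t\}$ from below using the hypothesis $H_Y^*(x/R)\geq t$, that is, $H_Y(1-x/R)\geq t$. Here I would work on the probability space $\left((0,1),\mathrm{Leb}\right)$, on which $H_Y$ is a random variable with the same distribution as $Y$, so that $\mathbb{P}\{Y\geq t\}=\mathrm{Leb}\{u\in(0,1):H_Y(u)\geq t\}$. Since $H_Y$ is non-decreasing, $H_Y(1-x/R)\geq t$ forces $H_Y(u)\geq t$ for every $u\in[1-x/R,1)$, an interval of length $x/R$, so $\mathbb{P}\{Y\geq t\}\geq x/R$. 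Combining the two bounds,
\[
\mathbb{P}\{X>t\}\leq x = R\cdot\frac{x}{R}\leq R\,\mathbb{P}\{Y\geq t\},
\]
which is the claim.

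I do not expect a genuine obstacle, since the analytic content lives entirely in Proposition \ref{maximal quantile} and the rest is bookkeeping with the generalized inverse. The one point needing care is the direction and strictness of the quantile/distribution inequalities at jump points of $F_X$ and $F_Y$; monotonicity of $H_Y$ sidesteps the boundary ambiguity in the lower bound for $\mathbb{P}\{Y\geq t\}$ cleanly, so no case split is required. The concluding ``Note'' is separate and follows from the intermediate value theorem applied to the continuous map $x\mapsto(1/x)\int_0^x H_Y^*(u)\,du$, which decreases from the essential supremum of $Y$ as $x\to 0^+$ to $\int_0^1 H_Y^*(u)\,du=\mathbb{E}Y$ as $x\to 1^-$; the condition $\mathbb{P}\{Y>t\}\neq 0$ places $t$ strictly below that essential supremum, supplying the required $x$.
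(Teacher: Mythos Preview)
Your proof is correct and follows essentially the same route as the paper: apply Proposition~\ref{maximal quantile} to get $H_X^*(x)\leq t$ (equivalently $H_X(1-x)\leq t$), convert this into $\mathbb{P}\{X>t\}\leq x$, and use the hypothesis $H_Y^*(x/R)\geq t$ together with monotonicity of $H_Y$ to get $\mathbb{P}\{Y\geq t\}\geq x/R$. The only cosmetic difference is that the paper phrases the conversion step via the fact that $H_X^*$ and $H_Y^*$ have the same distribution as $X$ and $Y$, whereas you use the equivalent Galois relation $H_X(\sigma)\leq u\iff \sigma\leq F_X(u)$; your treatment of the concluding ``Note'' is also fine.
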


\begin{proof}
By Proposition \ref{maximal quantile}, $H_X^*(x)\leq t$, so using the fact that $X$ and $H_X^*$ have the same distribution, $\mathbb{P}\left\{X>t\right\}\leq\mathbb{P}\left\{X>H_X^*(x)\right\}=\mathbb{P}\left\{H_X^*>H_X^*(x)\right\}\leq x$. Using the fact that $Y$ and $H_Y^*$ have the same distribution,
\[
R\frac{x}{R}\leq R\mathbb{P}\left\{H_Y^*\geq H_Y^*(x/R)\right\}= R\mathbb{P}\left\{Y\geq H_Y^*(x/R)\right\}\leq R\mathbb{P}\left\{Y\geq t\right\}
\]
\end{proof}

We shall consider the following condition: For all $\delta,x,y\in(0,1)$,
\begin{equation}
\left\vert H_Y^*\left(\delta x\right)-H_Y^*\left(\delta y\right)\right\vert\leq T\delta^{-1/p}\left\vert H_Y^*\left(x\right)-H_Y^*\left(y\right)\right\vert\label{deviation of quant fn}
\end{equation}
where $p>1$ and $T\geq 1$. This is implied by conditions of the form
\begin{eqnarray}
\frac{d}{dx}\frac{1-F_Y(x)}{F_Y'(x)}\leq \frac{1}{p}&:&x\in F_Y^{-1}\left((0,1)\right)\label{Iee}\\
\frac{H_Y''(t)}{H_Y'(t)}\leq \left(1+\frac{1}{p}\right)(1-t)^{-1}&:&t\in(0,1)\label{IIee}\\
H_Y'(1-\delta(1-x))\leq T\delta^{-1-1/p}H_Y'(x)&:&t\in(0,1)\label{IIIee}
\end{eqnarray}
Precise regularity conditions aside, (\ref{Iee})$\Leftrightarrow$(\ref{IIee}) by the inverse function theorem, (\ref{IIee})$\Rightarrow$(\ref{IIIee}) by recognizing a logarithmic derivative, and (\ref{IIIee})$\Rightarrow$(\ref{deviation of quant fn}) by FTC.

\begin{proposition}\label{areeae}
Let $Y$ be a real valued random variable such that $H_Y$ is convex and such that (\ref{deviation of quant fn}) holds for some $(p,T)\in(1,\infty)\times[1,\infty)$ and all $\delta,x,y\in(0,1)$. Consider any $R>1$ such that $T\leq(R-1)R^{-1/p}/2$. Then for all $x\in(0,1)$, $H_Y^*\left(x/R\right)\geq(1/x)\int_0^xH_Y^*(u)du$ (as required in Proposition \ref{prec prob}).
\end{proposition}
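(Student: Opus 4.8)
The plan is to set $g=H_Y^*$ and, for a fixed point $\xi\in(0,1)$ (I rename the statement's $x$ to $\xi$ to free up the dummy variables in (\ref{deviation of quant fn})), to prove the equivalent inequality $\int_0^\xi g(u)\,du\le\xi\,g(\xi/R)$. Two structural facts will be used: $g$ is non-increasing (since $H_Y$ is non-decreasing) and $g$ is convex (since $g(u)=H_Y(1-u)$ is $H_Y$ composed with a decreasing affine map). The condition (\ref{deviation of quant fn}) with $p>1$ forces $g$ to grow at most like $u^{-1/p}$ near $0$, so all integrals below converge. The first move is to split the target around $\xi/R$ by introducing the ``excess'' $P=\int_0^{\xi/R}\bigl(g(u)-g(\xi/R)\bigr)\,du\ge0$ and the ``gain'' $N=\int_{\xi/R}^{\xi}\bigl(g(\xi/R)-g(u)\bigr)\,du\ge0$, together with $J=\int_0^\xi\bigl(g(u)-g(\xi)\bigr)\,du\ge0$ and $K=g(\xi/R)-g(\xi)\ge0$. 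A direct computation gives the identity $J-\xi K=P-N=\int_0^\xi g-\xi\,g(\xi/R)$, so the whole proposition is equivalent to the single inequality $J\le\xi K$ (equivalently $P\le N$).

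Next I would produce one upper bound on $P$ and one lower bound on $N$. For the excess, substitute $u=s/R$ to obtain $P=\tfrac1R\int_0^\xi\bigl(g(s/R)-g(\xi/R)\bigr)\,ds$ and apply (\ref{deviation of quant fn}) with $\delta=1/R$ and the two points $s,\xi\in(0,1)$; since $g$ is non-increasing every absolute value opens with the correct sign, yielding $g(s/R)-g(\xi/R)\le TR^{1/p}\bigl(g(s)-g(\xi)\bigr)$ and hence
\[
P\le TR^{1/p-1}J.
\]
For the gain I would invoke convexity: on $[\xi/R,\xi]$ the graph of $g$ lies below the chord through $\bigl(\xi/R,g(\xi/R)\bigr)$ and $\bigl(\xi,g(\xi)\bigr)$, so $g(\xi/R)-g(u)\ge\frac{K}{\xi(1-1/R)}(u-\xi/R)$, and integrating this triangle gives
\[
N\ge\frac{\xi(1-1/R)}{2}\,K.
\]

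Finally I would combine the two estimates through the identity $J=\xi K+P-N$. From $P\le TR^{1/p-1}J$ and the lower bound on $N$,
\[
(1-TR^{1/p-1})J\le \xi K-N\le \xi K-\frac{\xi(1-1/R)}{2}K=\frac{1+1/R}{2}\,\xi K.
\]
The hypothesis $T\le(R-1)R^{-1/p}/2$, multiplied by $R^{1/p-1}$, reads $TR^{1/p-1}\le\frac{R-1}{2R}$, i.e. $1-TR^{1/p-1}\ge\frac{R+1}{2R}=\frac{1+1/R}{2}$; dividing the previous display by the positive quantity $1-TR^{1/p-1}$ then gives $J\le\xi K$, which is the assertion.

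The hard part is conceptual rather than computational: condition (\ref{deviation of quant fn}) only furnishes \emph{upper} bounds on increments of $g$, which is precisely what controls the excess $P$, whereas the gain $N$ must be bounded \emph{below}, and the deviation condition is silent in that direction. This is exactly where convexity of $H_Y$ is indispensable. One cannot drop the constraint on $R$ either: the function $g(u)=u^{-1/2}$ is convex, non-increasing, and satisfies (\ref{deviation of quant fn}) with $T=1,\ p=2$, yet $\int_0^\xi g>\xi g(\xi/R)$ once $R$ is too small, so the hypothesis $T\le(R-1)R^{-1/p}/2$ is genuinely needed. The numerical factor $\tfrac12$ in that hypothesis is nothing but the area factor of the triangle in the chord estimate, which is what makes the two constants match exactly.
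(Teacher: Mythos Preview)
Your proof is correct and is essentially the paper's own argument in analytic rather than pictorial form: your quantities $P,N,J,\xi K$ are exactly the paper's areas $D,A,B{+}C{+}D,A{+}B{+}C$, your bound $P\le TR^{1/p-1}J$ is the paper's change-of-variables step, your triangle bound $N\ge\tfrac{\xi(1-1/R)}{2}K$ is the paper's convexity step $B\le A$, and your closing algebra is a rearrangement of the paper's coefficient estimate. The only addition is your remark that (\ref{deviation of quant fn}) forces $H_Y^*(u)=O(u^{-1/p})$ so the integrals converge, which the paper leaves implicit.
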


\begin{proof}

\centering
\setlength{\unitlength}{1cm} \thicklines
\begin{tikzpicture}
\draw [-stealth] (0,0) -- (5,0);
\draw [-stealth] (0,0) -- (0,2.6);
\draw (1,2.8) arc (215:265:5.4);
\draw [dashed] (4.5,0) -- (4.5,1.72);
\draw [dashed] (0,0.578) -- (4.5,0.578);
\draw [dashed] (2,0) -- (2,1.72);
\draw [dashed] (0,1.72) -- (4.5,1.72);
\node at (4.5,-0.3){$x$};
\node at (2.0,-0.3){$x/R$};
\node at (-0.75,0.578){$H_Y^*(x)$};
\node at (-1,1.72){$H_Y^*(x/R)$};
\node at (3.8,1.3){$A$};
\node at (2.35,1.0){$B$};
\node at (1.06,1.14){$C$};
\node at (0.6,2.2){$D$};
\node at (1.85,2.25){$H_Y^*$};

\node at (2,-1){Figure 1: The graph of $H_Y^*$ and various regions of interest.};
\node at (2,-1.55){The letters $A, B, C, D$ denote the regions as well as their areas.};

\end{tikzpicture}

\raggedright


\bigskip

If $H_Y^*\left(x/R\right)=(1/x)\int_0^xH_Y^*(u)du$ then $D=A$. We show that $D\leq A$, which implies
\[
H_Y^*\left(x/R\right)\geq(1/x)\int_0^xH_Y^*(u)du
\]
By the change of variables $s=Ru$ and (\ref{deviation of quant fn}),
\[
\int_0^{x/R}\left(H_Y^*(u)-H_Y^*(x/R)\right)du\leq TR^{-1+1/p}\int_0^{x}\left(H_Y^*(s)-H_Y^*(x)\right)ds
\]
i.e. $D\leq TR^{-1+1/p}(B+C+D)$. By convexity and direct computation $B\leq A$ and $C=\left(A+B+C\right)/R$, so $C=\left(A+B\right)/(R-1)$ and $C\leq 2A/(R-1)$. By assumption, $TR^{-1+1/p}<1$, so
\[
D\leq \frac{TR^{-1+1/p}}{1-TR^{-1+1/p}}(B+C)\leq \frac{R+1}{(R-1)(T^{-1}R^{1-1/p}-1)}A
\]
By assumption, the coefficient of $A$ here is at most 1, i.e. $D\leq A$.
\end{proof}

Proposition \ref{areeae} has been simplified for brevity; one only needs conditions on the far right tail of the distribution, the assumption of convexity is hardly utilized, and the crude estimate $B\leq A$ can often be improved to give better dependence of $R$ on $T$ and $p$. For example, when $r>1$ and $t\mapsto\left(\mathbb{P}\left\{Y\geq t\right\}\right)^{-1/r}$ is convex, which is the basic assumption used by Kemperman and Pinelis, this function lies above its tangent lines, so  $\mathbb{P}\left\{Y\geq t\right\}$ lies below a corresponding tangent power-function with exponent $-r$, and reflecting about the line $y=x$, $H_Y^*$ lies below a tangent power-function with exponent $-1/r$.

\bigskip

\centering
\setlength{\unitlength}{1cm} \thicklines
\begin{tikzpicture}

\draw [-stealth] (-4.5,0) -- (-0.75,0);
\draw [-stealth] (-4.5,0) -- (-4.5,2.6);
\draw (-3.9,2.3) arc (212:238:7);
\draw [dashed] (-1.77,0) -- (-1.77,1.07);
\draw [dashed] (-2.925,0) -- (-2.925,1.07);
\draw [dashed] (-4.5,1.07) -- (-1.77,1.07);
\node at (-2.15,0.7){$A$};
\node at (-4,1.5){$D$};
\node at (-3,2.1){$H_Y^*$};
\node at (-1.75,-0.3){$x$};
\node at (-2.9,-0.3){$x/R$};

\draw [-stealth] (0,0) -- (3.75,0);
\draw [-stealth] (0,0) -- (0,2.6);
\draw (0.6,2.3) arc (212:238:7);
\draw (0.65,2.53) arc (200:250:4);
\draw [dashed] (2.73,0) -- (2.73,1.07);
\draw [dashed] (1.575,0) -- (1.575,1.07);
\draw [dashed] (0,1.07) -- (2.73,1.07);
\node at (2.75,-0.3){$x$};
\node at (1.6,-0.3){$x/R$};

\draw [-stealth] (4.5,0) -- (8.25,0);
\draw [-stealth] (4.5,0) -- (4.5,2.6);
\draw (5.15,2.53) arc (200:250:4);
\draw [dashed] (7.23,0) -- (7.23,1.07);
\draw [dashed] (6.075,0) -- (6.075,1.07);
\draw [dashed] (4.5,1.07) -- (7.23,1.07);
\node at (6.9,0.75){$A'$};
\node at (5,1.5){$D'$};
\node at (7.25,-0.3){$x$};
\node at (6.1,-0.3){$x/R$};

\node at (2,-1){Figure 2: $H_Y^*$ on the left, which lies below its tangent};

\node at (2,-1.5){power-function with exponent $-1/r$ on the right.};

\end{tikzpicture}

\raggedright

It then follows, setting $R=(1-1/r)^{-r}$, that $D\leq D'=A'\leq A$, so $D\leq A$ as required in the proof of Proposition \ref{areeae}, and we recover Kemperman's result (details left to the reader).

\end{document}